\newtheorem{theorem}{Theorem}
\theoremstyle{definition}
\theoremstyle{remark}
\let \f=\varphi
\begin{document}
\title[Multidimensional Hausdorff operators]
{Boundedness of multidimensional Hausdorff\\ operators on $L^1$
and $H^1$ spaces}

\author{Elijah Liflyand}
\address{Department of Mathematics, Bar-Ilan University, Ramat-Gan 52900, Israel}
\email{liflyand@yahoo.com, liflyand@math.biu.ac.il}
\urladdr{www.math.biu.ac.il/~liflyand}

\begin{abstract}
For a wide family of multivariate Hausdorff operators, a new
stronger condition for the boundedness of an operator from this
family on the real Hardy space  $H^1$ by means of atomic
decomposition.
\end{abstract}

\maketitle

2000 {\it Mathematics Subject Classification.} Primary 47B38,
42B10; Secondary 46E30.

\quad

Key words and phrases: Hausdorff operator, real Hardy space,
atomic decomposition, eigenvalues.

\quad

\section{Introduction}

In the one-dimensional case Hausdorff operators on the real line
were introduced in \cite{Ge} and studied on the Hardy space in
\cite{LM1}. As in \cite{BM2}, we define a multidimensional
Hausdorff type operator by

\begin{eqnarray}\label{haop}
({\mathcal H}f)(x)=({\mathcal H}_{\Phi}f)(x)=({\mathcal
H}_{\Phi,A}f)(x) =\int_{{\mathbb R}^n}\Phi(u)f\big(xA(u)\big)\,du,
\end{eqnarray}
where $A=A(u)=(a_{ij})_{i,j=1}^n=\big(a_{ij}(u)\big)_{i,j=1}^n$ is
the $n\times n$ matrix with the entries $a_{ij}(u)$ being
measurable functions of $u$. This matrix may be singular at most
on a set of measure zero; $xA(u)$ is the row $n$-vector obtained
by multiplying the row $n$-vector $x$ by the matrix $A.$ Of
course, $xA$ can be written as $A^T x^T,$ where both matrix and
vector are transposed, the latter to a column vector.

We are going to prove sufficient conditions, in terms of $\Phi$
and $A,$ for the boundedness of the whole range of Hausdorff type
operators (\ref{haop}) in $H^1({\mathbb R}^n).$

Before proving the result we give natural assumptions on $\Phi$
and $A,$ which provide the boundedness of the Hausdorff operator
in $L^1({\mathbb R}^n).$

Let the following condition be satisfied:

\begin{eqnarray}\label{l1}\|\Phi\|_{L_{A}}=\int_{{\mathbb R}^n}|\Phi(u)|\,|\det
A^{-1}(u)|\,du<\infty,                            \end{eqnarray}
or $\f(u)=\Phi(u)\det A^{-1}(u)\in L^1({\mathbb R}^n).$

Among the other basic properties of Hausdorff operators, one may
find in \cite{BM2}, in slightly different terms, that the operator
${\mathcal H}f$ is bounded taking $L^1$ into $L^1,$ with

\begin{eqnarray*}\|{\mathcal
H}f\|_{L^1}\le\|\Phi\|_{L_A}\|f\|_{L^1}.\end{eqnarray*}

It was proved in \cite{Mor} that the same condition provides the
boundedness of Hausdorff type operators on $H^1(\mathbb R^n)$ for
diagonal matrices $A$ with all diagonal entries equal to one
another.

We denote

\begin{eqnarray*}\|B\|=\|B(u)\|=\max_j(|b_{1j}(u)|+...+|b_{nj}(u)|),\end{eqnarray*}
where $b_{nj}$ are the entries of the matrix $B,$ to be the
operator $\ell$-norm. We will say that $\Phi\in L^*_{B}$ if
\begin{eqnarray*}
\|\Phi\|_{L^*_{B}}=\int_{{\mathbb R}^n}|\Phi(u)|\,\|B(u)\|^n
du<\infty.\end{eqnarray*}

The following result was proved by Lerner and Liflyand \cite{LL}
for the boundedness of Hausdorff type operators in $H^1(\mathbb
R^n)$ for general matrices $A.$ The proof used duality argument.

\begin{theorem}\label{hh1}

The Hausdorff operator ${\mathcal H}f$ is bounded on the real
Hardy space $H^1(\mathbb R^n)$ provided $\Phi\in L_{A^{-1}}^*,$
and

\begin{eqnarray}\label{ed} \|{\mathcal H}f\|_{H^1(\mathbb R^n)}\le
\|\Phi\|_{L_{A^{-1}}^*}\|f\|_{H^1(\mathbb R^n)}.\end{eqnarray}

\end{theorem}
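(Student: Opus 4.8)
The plan is to exploit the duality $\big(H^1(\mathbb{R}^n)\big)^{*}=\mathrm{BMO}(\mathbb{R}^n)$ and to reduce the $H^1$ bound to a bound for the adjoint operator on $\mathrm{BMO}$. For $f\in H^1$ and $g\in\mathrm{BMO}$ one writes $\langle\mathcal H f,g\rangle=\langle f,\mathcal H^{*}g\rangle$, so that
\begin{eqnarray*}
\|\mathcal H f\|_{H^1}=\sup_{\|g\|_{\mathrm{BMO}}\le1}|\langle\mathcal H f,g\rangle|
=\sup_{\|g\|_{\mathrm{BMO}}\le1}|\langle f,\mathcal H^{*}g\rangle|
\le\|f\|_{H^1}\,\|\mathcal H^{*}\|_{\mathrm{BMO}\to\mathrm{BMO}}.
\end{eqnarray*}
Thus it suffices to prove that $\mathcal H^{*}$ is bounded on $\mathrm{BMO}$ with $\|\mathcal H^{*}\|_{\mathrm{BMO}\to\mathrm{BMO}}\le\|\Phi\|_{L_{A^{-1}}^{*}}$.

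First I would compute the adjoint explicitly. Writing $\langle\mathcal H f,g\rangle$ as a double integral and performing, for each fixed $u$, the substitution $y=xA(u)$ (so that $dx=|\det A^{-1}(u)|\,dy$), Fubini's theorem gives
\begin{eqnarray*}
(\mathcal H^{*}g)(y)=\int_{\mathbb R^n}\Phi(u)\,|\det A^{-1}(u)|\,g\big(yA^{-1}(u)\big)\,du.
\end{eqnarray*}
This is again a Hausdorff type operator, now built from the matrix $A^{-1}$ and the density $\Phi(u)|\det A^{-1}(u)|$; the emergence of $A^{-1}$ is exactly what will turn the hypothesis $\Phi\in L_{A^{-1}}^{*}$ into the governing quantity.

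Next I would estimate the mean oscillation of $\mathcal H^{*}g$ over an arbitrary cube $Q$, measured as $\inf_{c}\frac1{|Q|}\int_Q|\mathcal H^{*}g-c|\,dx$. Bounding the oscillation of the superposition by the superposition of the oscillations (the generalized Minkowski inequality, moving the oscillation inside the $u$-integral) reduces matters to estimating, for $B=A^{-1}(u)$, the quantity $\inf_c\frac1{|Q|}\int_Q|g(xB)-c|\,dx$. The change of variables $y=xB$ converts this into the mean oscillation of $g$ over the parallelepiped $QB$. Here the operator $\ell$-norm enters: since $\|xB\|_\infty\le\|B\|\,\|x\|_\infty$, the image $QB$ lies in a cube $\widetilde Q$ with $|\widetilde Q|=\|B\|^{n}\,|Q|$, while $|QB|=|\det B|\,|Q|$. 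Comparing the oscillation over $QB$ with that over $\widetilde Q$ produces the ratio $|\widetilde Q|/|QB|=\|B\|^{n}/|\det B|$, i.e.
\begin{eqnarray*}
\inf_c\frac1{|Q|}\int_Q\Big|g\big(xA^{-1}(u)\big)-c\Big|\,dx
\le\frac{\|A^{-1}(u)\|^{n}}{|\det A^{-1}(u)|}\,\|g\|_{\mathrm{BMO}}.
\end{eqnarray*}
Inserting this into the oscillation estimate for $\mathcal H^{*}g$, the weight $|\det A^{-1}(u)|$ cancels against the Jacobian factor in the adjoint, leaving $\int_{\mathbb R^n}|\Phi(u)|\,\|A^{-1}(u)\|^{n}\,du=\|\Phi\|_{L_{A^{-1}}^{*}}$ as the bound for $\|\mathcal H^{*}g\|_{\mathrm{BMO}}$, which is precisely (\ref{ed}).

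The main obstacle is the geometric covering step: controlling the $\mathrm{BMO}$ seminorm of the composition $g(\cdot\,A^{-1}(u))$ by that of $g$ with the sharp power $\|A^{-1}(u)\|^{n}$, and aligning the definition of the operator $\ell$-norm with the $\ell^\infty$-geometry of the cubes so that the constant is exactly $1$ and not a dimensional one. Normalizing $\mathrm{BMO}$ through $\inf_c\frac1{|Q|}\int_Q|g-c|$ is what keeps the constant clean, since it avoids the spurious factor $2$ coming from re-centering by the average; the cancellation of the determinant then delivers $\|\Phi\|_{L_{A^{-1}}^{*}}$ with no loss. The remaining points are routine: the applicability of Fubini's theorem and of the interchange of oscillation and $u$-integration, both legitimate because $\Phi(u)\|A^{-1}(u)\|^{n}\in L^1$ under the hypothesis.
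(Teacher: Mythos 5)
Your proposal is correct and follows essentially the route the paper attributes to this theorem: the paper does not reprove Theorem \ref{hh1} but cites Lerner--Liflyand \cite{LL} and states explicitly that ``the proof used duality argument,'' which is exactly your reduction to the adjoint on $\mathrm{BMO}$, the change of variables $y=xA(u)$, and the covering of the parallelepiped $QB$ by a cube of volume $\|B\|^n|Q|$ so that the determinant cancels and $\|\Phi\|_{L^*_{A^{-1}}}$ emerges. The only caveat worth recording is that the equality $\|\mathcal H f\|_{H^1}=\sup_{\|g\|_{\mathrm{BMO}}\le1}|\langle\mathcal H f,g\rangle|$ and the legitimacy of the pairing for general $f\in H^1$, $g\in\mathrm{BMO}$ hold only up to the normalization constants of the Fefferman duality, so the stated bound is clean modulo those conventions.
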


The difference in conditions $\Phi\in L_{A^{-1}}$ and $\Phi\in
L_{A^{-1}}^*$ seemed to be quite natural. In \cite{LL} and then in
\cite{Lif} the problem of the sharpness of Theorem \ref{hh1} was
posed. We will prove that a weaker condition provides the
boundedness of Hausdorff type operators on $H^1(\mathbb R^n).$ The
proof will be based on atomic decomposition of $H^1(\mathbb R^n).$

In what follows $a\ll b$ means that $a\le Cb$ for some absolute
constant $C$ but we are not interested in explicit indication of
this constant.

\section{Main result and proof}

Let $||B||_2=\max_{|x|=1}|Bx^T|,$ where $|\cdot|$ denotes the
Euclidean norm. It is known (see, e.g., \cite[Ch.5, 5.6.35]{HJ})
that this norm does not exceed any other matrix norm. We will say
that $\Phi\in L^2_{B}$ if

\begin{eqnarray*}
\|\Phi\|_{L^2_{B}}=\int_{{\mathbb R}^n}|\Phi(u)|\,\|B(u)\|_2^n
du<\infty.\end{eqnarray*}The following result is true.

\begin{theorem}\label{hh2}

The Hausdorff operator ${\mathcal H}f$ is bounded on the real
Hardy space $H^1(\mathbb R^n)$ provided $\Phi\in L^2_{A^{-1}},$
and

\begin{eqnarray}\label{ed2} \|{\mathcal H}f\|_{H^1(\mathbb R^n)}\ll
\|\Phi\|_{L^2_{A^{-1}}}\|f\|_{H^1(\mathbb R^n)}.\end{eqnarray}

\end{theorem}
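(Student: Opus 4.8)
The plan is to prove boundedness directly via atomic decomposition, as the author promises, rather than the duality argument used in Theorem~\ref{hh1}. Recall that $H^1(\mathbb R^n)$ has an atomic characterization: every $f\in H^1$ decomposes as $f=\sum_k\lambda_k a_k$ with $\sum_k|\lambda_k|\ll\|f\|_{H^1}$, where each $a_k$ is a $(1,\infty)$-atom, i.e. supported in a ball $Q_k$, satisfying $\|a_k\|_\infty\le|Q_k|^{-1}$ and $\int a_k=0$. By linearity of $\mathcal H$ and the fact that the $H^1$ norm is dominated by $\sum|\lambda_k|\|\mathcal Ha_k\|_{H^1}$, it suffices to bound $\|\mathcal Ha\|_{H^1}$ for a single atom $a$ by an absolute constant times $\|\Phi\|_{L^2_{A^{-1}}}$.

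The key idea is that the Hausdorff operator acts by an average over $u$ of the dilation-type maps $f\mapsto f(\,\cdot\,A(u))$, so I would try to express $\mathcal Ha$ as a superposition of atoms and track how the atomic data transform under $x\mapsto xA(u)$. First I would fix an atom $a$ supported in a ball $Q$ centered at $x_0$ with radius $r$. For each fixed $u$, set $a_u(x)=a(xA(u))$; the crucial point is that the cancellation $\int a=0$ is preserved up to the Jacobian, since $\int a_u(x)\,dx=|\det A^{-1}(u)|\int a(y)\,dy=0$. Next I would identify the support: $a_u$ is supported where $xA(u)\in Q$, i.e. in the image of $Q$ under the linear map $y\mapsto yA^{-1}(u)$, which is contained in a ball centered at $x_0A^{-1}(u)$ of radius comparable to $r\,\|A^{-1}(u)\|_2$. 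Here the spectral norm $\|A^{-1}(u)\|_2=\max_{|x|=1}|xA^{-1}(u)|$ enters precisely because it controls the Euclidean diameter of the image of a Euclidean ball. Estimating $\|a_u\|_\infty\le r^{-n}$ and renormalizing to the enclosing ball of radius $\ll r\|A^{-1}(u)\|_2$ produces the factor $\|A^{-1}(u)\|_2^n$, so that $a_u$ is $\ll\|A^{-1}(u)\|_2^n$ times a genuine atom.

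Assembling these pieces, $\mathcal Ha(x)=\int_{\mathbb R^n}\Phi(u)\,a_u(x)\,du$ is, after the renormalization above, an integral superposition of atoms with total mass $\ll\int|\Phi(u)|\,\|A^{-1}(u)\|_2^n\,du=\|\Phi\|_{L^2_{A^{-1}}}$. A continuous (integral) version of the atomic triangle inequality in $H^1$ then yields $\|\mathcal Ha\|_{H^1}\ll\|\Phi\|_{L^2_{A^{-1}}}$, uniformly in the atom $a$, which gives \eqref{ed2}. Since $\|B\|_2$ never exceeds the $\ell$-norm $\|B\|$, the hypothesis $\Phi\in L^2_{A^{-1}}$ is indeed weaker than $\Phi\in L^*_{A^{-1}}$, so this improves Theorem~\ref{hh1}.

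The main obstacle I anticipate is justifying the integral form of the atomic inequality, namely that $\int_{\mathbb R^n}\Phi(u)\,a_u\,du$ lies in $H^1$ with norm controlled by the integrated atomic masses. This requires some care: one must verify measurability of $u\mapsto a_u$ as an $H^1$-valued map and apply a vector-valued (Bochner) integral bound, or else discretize the $u$-integral and pass to the limit. A secondary technical point is handling the set where $A(u)$ is singular (where $A^{-1}(u)$ is undefined); this has measure zero by hypothesis, but one should confirm the integrand is well-defined almost everywhere and that the singular set contributes nothing.
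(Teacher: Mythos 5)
Your proposal is correct, and its core is the same as the paper's: the decisive computation in both is that for fixed $u$ the function $a(xA(u))$ has mean zero (the Jacobian factors out of the cancellation condition), is supported in the image of the ball $B(x_0,r)$ under $y\mapsto yA^{-1}(u)$, hence in a ball of radius $\ll r\,\|A^{-1}(u)\|_2$, and is therefore $\|A^{-1}(u)\|_2^n$ times a genuine atom after renormalization. The paper reaches the same factor slightly more circuitously, as $l_1^{-n/2}$ with $l_1$ the least eigenvalue of $A^TA$, identifying $l_1^{-1/2}=\|A^{-1}\|_2$ only at the very end; your direct route through the operator norm of $A^{-1}$ is cleaner. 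Where you genuinely diverge is exactly at the obstacle you flag: you decompose $f$ into atoms first and then need a continuous (Bochner-type) atomic superposition bound for $\int_{\mathbb R^n}\Phi(u)\,a_u\,du$, whereas the paper reverses the order --- it first uses the characterization $\|g\|_{H^1}\sim\sum_{p=0}^n\|R_pg\|_{L^1}$ (with $R_p$ the Riesz transforms and $R_0$ the identity) together with Minkowski's integral inequality in $L^1$ to get $\|{\mathcal H}f\|_{H^1}\ll\int|\Phi(u)|\,\|f(\cdot A(u))\|_{H^1}\,du$, and only then applies the atomic decomposition to $f(\cdot A(u))$ for each fixed $u$, obtaining $\|f(\cdot A(u))\|_{H^1}\ll\|A^{-1}(u)\|_2^n\|f\|_{H^1}$. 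That reversal is precisely what lets the paper avoid both the vector-valued integral and the related subtlety that uniform boundedness on atoms does not by itself imply $H^1$-boundedness; if you keep your order of steps you must actually carry out the Bochner-integral argument you sketch (measurability of $u\mapsto a_u$ as an $H^1$-valued map and the triangle inequality for the Bochner integral), or else adopt the paper's Riesz-transform device for the interchange. Your remaining remarks (the null set where $A(u)$ is singular, and the comparison of $\|\cdot\|_2$ with the $\ell$-norm) mirror the paper's own.
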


\begin{proof}

Let $a(x)$ denote an atom (a $(1,\infty,0)$-atom), a function
satisfying the following conditions:

\begin{eqnarray}\label{c1} \mbox{\rm supp}\,a\subset
B(x_0,r);\end{eqnarray}

\begin{eqnarray}\label{c2} ||a||_\infty\le\frac{1}{
|B(x_0,r)|};\end{eqnarray}

\begin{eqnarray}\label{c3} \int_{\mathbb R^n} a(x)\,dx=0.
\end{eqnarray}

It is well known that

\begin{eqnarray}\label{ha} ||f||_{H^1}\sim\inf\{\sum\limits_k |c_k|:
f(x)=\sum\limits_k c_k a_k(x)\},                \end{eqnarray}
where $a_k$ are atoms.

The other value to which $||f||_{H^1}$ is equivalent is

\begin{eqnarray}\label{hr} \sum\limits_{p=0}^n \int_{\mathbb R^n}
|R_pf(x)|\,dx,\end{eqnarray} where $R_0f\equiv f$ and $R_p$ are
$n$ Riesz transforms (see, e.g., \cite{We}).

We now have

\begin{eqnarray*} &||{\mathcal H}f||_{H^1}=||\int_{{\mathbb R}^n}
\Phi(u)f(\cdot A(u))\,du||_{H^1}\\
&\ll \sum\limits_{p=0}^n \int_{\mathbb R^n} |R_p{\mathcal H}
f(x)|\,dx\le \int_{\mathbb R^n} |\Phi(u)| \sum\limits_{p=0}^n
||R_pf(\cdot A(u))||_{L^1} du\\
&\ll \int_{\mathbb R^n} |\Phi(u)|\, ||f(\cdot A(u)||_{H^1}du.
\end{eqnarray*}

We wish to estimate the right-hand side from above by using
(\ref{ha}). Let

\begin{eqnarray*} f(x A(u))=\sum\limits_k c_k
a_k(xA(u)).                                      \end{eqnarray*}
We will show that multiplying $a_k(xA(u))$ by a constant depending
on $u$ (actually on $A(u)$) we get an atomic decomposition of $f$
itself, with no composition in the argument. Since we analyze all
such decompositions for $f,$ the upper bound will be $||f||_{H^1}$
times the mentioned constant, which completes the proof.

Thus, let us figure out when, or under which transformation
$a_k(xA(u))$ becomes an atom. We have

\begin{eqnarray*} \int_{\mathbb R^n} a_k(xA(u))\,dx=
\int_{a_k(xA(u))\neq0} a_k(xA(u))\,dx,           \end{eqnarray*}
and under substitution $xA(u)=v$ the integral becomes
$\int_{\mathbb R^n} a_k(v)\,dv$ times a Jacobian depending only on
$u.$ This integral vanishes because of (\ref{c3}).

The support of $a_k(xA(u))$ is $<xA,xA>\le r^2,$ an ellipsoid. To
use known results, let us represent it in the transposed form
$<A^T x^T,A^T x^T>\le r^2.$

Let us solve the following extremal problem. We are looking for
the $\min$ of the quadratic form $<Bx^T,Bx^T>$, where $B$ is a
non-singular $n\times n$ real-valued matrix - we denote the linear
transformation and its matrix with the same symbol - on the unit
sphere $<x^T,x^T>=1.$

Denoting by $B^*$ the adjoint to $B,$ we arrive to the equivalent
problem for $<B^* Bx^T,x^T>.$ Since $(B^*)^*=B$, the operator $B^*
B$ is self-adjoint:

\begin{eqnarray*}<B^* Bx,y>=<Bx,By>=<x,B^* By>,\end{eqnarray*}
and thus positive definite. Since $B$ is non-singular, the same
$B^* B$ is.

If a transformation is positive definite, all its eigenvalues are
non-negative; if it is also non-singular all the eigenvalues are
strictly positive. Define the minimal eigenvalue of $B^* B$ by
$l_1.$

By Theorem 1 from Ch.II, \S17 of \cite{Gel}, for self-adjoint $C$
the form $<Cx,x>$ on the unit sphere attains its minimum equal to
the least eigenvalue of $C.$

Hence the solution of the initial problem is just $l_1.$ We
mention also that the matrix of the adjoint real transformation is
the transposed initial. Therefore, the desired minimum is the
least eigenvalue $l_1$ of $B^T B.$

It follows from this by taking $B=A^T$

\begin{eqnarray*} l_1 <x^T,x^T>\le<A^T x^T,A^T x^T>\le
r^2,                                            \end{eqnarray*}
and every point of the ellipsoid $<A^T x^T,A^T x^T>\le r^2$ lies
in the ball $<x^T,x^T>\le r^2/l_1,$ where $l^1$ is the minimal
eigenvalue of the matrix $A^T A.$

It remains to check the $\infty$ norm. Instead of the measure of
the ball in (\ref{c2}) we must have the measure of the ball of
radius $r/\sqrt{l_1}.$ This is achieved by multiplying
$a_k(xA(u))$ by $l_1^{n/2}$ and hence $l_1^{n/2}a_k(xA(u))$ is an
atom. Correspondingly,

\begin{eqnarray*} ||f(\cdot A(u))||_{H^1}\ll l_1^{-n/2}||f||_{H^1},
\end{eqnarray*}
and finally ${\mathcal H} f$ belongs to $H^1$ provided

\begin{eqnarray}\label{ed0} \int_{\mathbb R^n} |\Phi(u)|\,
l_1^{-n/2}(u)\,du<\infty.   \end{eqnarray}

Further, $l_1^{-n/2}=L_n^{n/2},$ where $L_n$ is the maximal
eigenvalue of the matrix $(A^TA)^{-1}=A^{-1}(A^T)^{-1}.$ But it is
known that such $L_n$ is equal to the spectral radius of the
corresponding matrix $A^{-1}(A^T)^{-1}$ and, in turn, to
$||A^{-1}||_2^2.$  Replacing $l_1^{-n/2}(u)$ in (\ref{ed0}) with
the obtained bound completes the proof. \hfill\end{proof}

As is mentioned above, the obtained condition (\ref{ed2}) is
weaker that (\ref{ed}) but of course still more restrictive than
(\ref{l1}).

\section{Acknowledgements}

The author is grateful to T. Bandman, S. Kislyakov, and A. Lerner
for stimulating discussions.

\end{document}